\theoremstyle{plain}
\newtheorem{theorem}{Theorem}[section]
\newtheorem{lemma}[theorem]{Lemma}
\theoremstyle{definition}
\newtheorem{remark}[theorem]{Remark}
\newtheorem*{ack}{Acknowledgements}
\numberwithin{equation}{section}
\newcommand\E{\mathds E}
\newcommand\N{\mathds N}
\newcommand\Z{\mathds Z}
\newcommand\R{\mathds R}
\renewcommand\P{\mathds P}
\renewcommand\d{\mathrm{d}}
\newcommand\e{\mathrm{e}}
\begin{document}\allowdisplaybreaks
\title[Subgeometric Rates of Convergence under
Discrete Time Subordination]{\bfseries Subgeometric Rates
of Convergence for Discrete Time Markov Chains under
Discrete Time Subordination}

\author[C.-S.~Deng]{Chang-Song Deng}
\address[C.-S.~Deng]{School of Mathematics and Statistics\\ Wuhan University\\ Wuhan 430072, China}
\email{dengcs@whu.edu.cn}
\thanks{Financial support through the
National Natural Science Foundation
of China (11401442, 11831015) is gratefully acknowledged.}

\subjclass[2010]{\emph{Primary:} 60J05.
\emph{Secondary:} 60G50.}
\keywords{rate of convergence, subordination,
Bernstein function, moment estimate, Markov chain}

\maketitle

\begin{abstract}
    In this paper, we are concerned with the subgeometric
    rate of convergence of a Markov chain
    with discrete time parameter to its invariant
    measure in the $f$-norm. We clarify how three typical
    subgeometric rates of convergence are inherited
    under a discrete time version of
    Bochner's subordination. The crucial point
    is to establish the corresponding moment estimates
    for discrete time subordinators under some reasonable
    conditions on the underlying Bernstein function.
\end{abstract}

\section{Introduction}

This article is a continuation of the very recent
work \cite{DSS17}, where
subgeometric rates of convergence are established
for continuous time Markov processes under subordination
in the sense of Bochner, and it aims to derive the analogous
result when the time parameter is discrete. Readers
are urged to refer to \cite[Chapter 5]{Chen05} and
\cite[Chapters 13 and 15]{MT93} for some background
on the topic of convergence rates of Markov processes.
For recent developments on
subgeometric ergodicity,
see e.g.\ \cite{But14, dou09, DFMS04, DMS07,
for05, RW01}.

First, we recall the notion of discrete
time subordinator, which is a discrete time counterpart
of the classical continuous time
subordinator (i.e. nondecreasing L\'{e}vy process on $[0,\infty)$) and was initialed
in \cite{BS12}; see also
\cite{BC15, BC15b, BCT17, Mi16, MS17} for
further developments on random walks under
discrete time subordination.
A function $\phi:(0,\infty)\rightarrow[0,\infty)$
is a Bernstein function if $\phi$ is
a $C^\infty$-function satisfying
$(-1)^{n-1}\phi^{(n)}\geq0$
for all $n\in\N$ (here $\phi^{(n)}$ denotes
the $n$-th derivative of $\phi$). It is well known,
see e.g.\ \cite[Theorem 3.2]{SSV12}, that every
Bernstein function has a unique L\'{e}vy--Khintchine
representation
$$
    \phi(x)
    =a+bx+\int_{(0,\infty)}\left(1-\e^{-xy}
    \right)\,\nu(\d y),
    \quad x>0,
$$
where $a\geq0$ is the killing term, $b\geq0$ is
the drift term and $\nu$ is a Radon measure
on $(0,\infty)$ such that
$\int_{(0,\infty)}(1\wedge y)\,\nu(\d y)<\infty$.
As usual, we make the convention that
$\phi(0):=\phi(0+)=a$. For our purpose, we will assume that
$\phi$ has no killing term (i.e.\ $a=0$); that is, $\phi$
is of the form
\begin{equation}\label{bern-11}
    \phi(x)=bx+\int_{(0,\infty)}\left(1-\e^{-xy}
    \right)\,\nu(\d y),\quad x\geq0,
\end{equation}
where $b$ and $\nu$ are as above.
Without loss of generality, we also assume
that $\phi(1)=1$; otherwise, we replace
$\phi$ by $\phi/\phi(1)$.

For $m\in\N$, we set
\begin{gather*}
    c(\phi,m)=\frac{1}{m!}\int_{(0,\infty)}
    y^m\e^{-y}\,\nu(\d y)+
    \begin{cases}
        b,&\text{if\ \ }m=1,\\
        0,&\text{if\ \ }m\geq2.
    \end{cases}
\end{gather*}
Since
$$
    \sum_{m=1}^\infty c(\phi,m)=b+\sum_{m=1}^\infty
    \frac{1}{m!}\int_{(0,\infty)}
    y^m\e^{-y}\,\nu(\d y)
    =b+\int_{(0,\infty)}\left(1-\e^{-y}\right)\,\nu(\d y)
    =\phi(1)=1,
$$
we know that $\{c(\phi,m):m\in\N\}$ gives rise to
a probability measure on $\N$, and hence we can define
a random walk $T=\{T_n:n\in\N\}$ on $\N$ by
$T_n:=\sum_{k=1}^nR_k$, where $\{R_k:k\in\N\}$ is a sequence
of independent and identically distributed random variables
with $\P(R_k=m)=c(\phi,m)$ for $k,m\in\N$. As a strictly
increasing process, the random
walk $T$ is called a discrete time
subordinator associated with the Bernstein
function $\phi$. Obviously, $T_n\geq n$, and
for $m,n\in\N$ with $m\geq n$,
$$
    \P(T_n=m)=\sum_{m_1+\dots+m_n=m}
    \prod_{i=1}^nc(m_i,\phi).
$$

Let $X=\{X_n:n\in\N\}$ be a Markov chain on a general
measurable state
space $E$, and denote by $P^n(x,\d y)$ the
$n$-step transition kernel. Throughout this paper, we
always assume that $X$ and $T$ are independent, and
that $X$ has an invariant measure $\Pi$:
$$
    \int_EP^n(x,\cdot)\,\Pi(\d x)=\Pi
    \quad \text{for all $n\in\N$.}
$$
The subordinate process is given by the random time-change
$X_n^\phi:=X_{T_n}$. The process $X^\phi=\{X_n^\phi:n\in\N\}$
is again a Markov chain, and it follows easily
from the independence of $X$ and $T$ that the
$n$-step transition kernel of $X^\phi$ is
\begin{equation}\label{tran65}
    P^n_\phi(x,\d y)=\sum_{m=n}^\infty P^m(x,\d y)
    \P(T_n=m).
\end{equation}
This implies that $\Pi$ is also invariant for the
time-changed chain $X^\phi$.

For a (measurable) control function
$f:E\rightarrow[1,\infty)$, the
$f$-norm (cf. \cite[Chapter 14]{MT93}) of a signed
measure $\mu$ on $E$ is defined as
$\|\mu\|_f:=\sup_{|g|\leq f}|\mu(g)|$, where
the supremum ranges over all measurable functions
$g:E\rightarrow\R$ with $|g|\leq f$, and
$\mu(g)=\int_Eg\,\d\mu$. If $f\equiv1$,
then the $f$-norm $\|\cdot\|_f$ reduces
to the total variation
norm $\|\cdot\|_{\operatorname{TV}}$;
since $f\geq1$, we always have $\|\cdot\|_f\geq
\|\cdot\|_{\operatorname{TV}}$; if furthermore $f$ is bounded
then these two norms are equivalent.

It is said that the process $X$ has subgeometric convergence
in the $f$-norm if
\begin{equation}\label{rate1}
    \left\|P^n(x,\cdot)-\Pi\right\|_f\leq C(x)r(n),
    \quad x\in E,\,n\in\N,
\end{equation}
where $C(x)$ is a positive constant depending
on $x\in E$ and
$r:\N\rightarrow(0,1]$ is a
nonincreasing function with $r(n)\downarrow0$
and $\log r(n)/n \uparrow 0$ as $n\rightarrow\infty$.
Here, $r$ is called the subgeometric rate.
In many specific models, the convergence
rate $r$ can be explicitly given
and typical examples contain
\begin{equation}\label{typical}
    r(n)=\e^{-\theta n^\delta},\quad
    r(n)=n^{-\beta},\quad
    r(n)=\log^{-\gamma}(2+n),
\end{equation}
where $\theta>0$, $\delta\in(0,1)$ and $\beta,\gamma>0$ are
some constants.
Let $\Z_+=\N\cup\{0\}$ and $\{p_k:k\in\Z_+\}$ be a
sequence such that $p_0=1$, $p_k\in(0,1)$ for all
$k\in\N$, and
$\lim_{k\rightarrow\infty}\prod_{i=1}^kp_i=0$. Consider
the backward recurrence time
chain (cf.\ \cite[Section 3.3.1]{MT93})
on the countable state space $\Z_+$ with one-step
transition kernel $P$ given by $P(k,k+1)=1-P(k,0)=p_k$
for all $k\in\Z_+$. Then this chain admits the
convergence rates in \eqref{typical}
under some assumptions,
see \cite[Section 3.1]{DFMS04} for details.

In recent years, there has been an increasing
interest in the stability of properties
of continuous time Markov processes and their
semigroups under Bochner's subordination.
See \cite{GRW11} for the
dimension-free Harnack inequality for subordinate
semigroups, \cite{DS15b} for shift Harnack inequality
for subordinate semigroups, \cite{DS15a} for
the quasi-invariance property of Brownian motion
under random time-change, and \cite{DSS17}
for subgeometric rates of convergence for continuous
time Markov processes under continuous
time subordination. Subordinate functional
inequalities can be found
in  \cite{BM07, GM15, SW12}.

It is a natural question whether subgeometric rates
of convergence can be preserved under discrete
time subordination. If $P^n$ is subgeometrically
convergent to $\Pi$ in the $f$-norm, is it possible
to derive quantitative
bounds on the convergence rates of the subordinate
Markov chain $X^\phi$? What we are going to do is to
find some function
$r_\phi:\N\rightarrow(0,1]$ such
that $\lim_{n\rightarrow\infty}r_\phi(n)=0$ and
\begin{equation}\label{rate2}
    \left\|P^n_\phi(x,\cdot)-\Pi\right\|_f\leq C(x)r_\phi(n),
    \quad x\in E,\,n\in\N
\end{equation}
for some constant $C(x)>0$ depending only
on $x\in E$. As in \cite{DSS17}, it turns out that
if the convergence rates of the original
chain $X$ are of the three typical forms
in \eqref{typical},
then we are able
to obtain convergence rates for the
subordinate Markov chain $X^\phi$ under some reasonable
assumptions on the underlying Bernstein function.

The main result of this note is the following.
As usual, denote by $\phi^{-1}$ the inverse function
of the (strictly increasing) Bernstein
function $\phi$.

\begin{theorem}\label{main}
Let $X$ be a discrete time Markov chain and
$T$ an independent discrete time subordinator associated
with Bernstein function $\phi$ given
by \eqref{bern-11} such that $\phi(1)=1$.

\begin{enumerate}
\item[\bfseries\upshape a)]
    Assume that \eqref{rate1} holds with rate $r(n)=\e^{-\theta n^\delta}$ for some constants $\theta>0$ and $\delta\in(0,1]$.
    If
    \begin{equation}\label{ahg54}
        \nu(\d y)\geq cy^{-1-\alpha}\,\d y
    \end{equation}
    for some constants $c>0$ and $\alpha\in(0,1)$, then \eqref{rate2} holds with rate
    $$
        r_\phi(n)=\exp\left[
            -C\,n^{
            \frac{\delta}{\alpha(1-\delta)+\delta}
            }
        \right],
    $$
    where $C=C(\theta,\delta,c,\alpha)>0$.

\item[\bfseries\upshape b)]
    Assume that \eqref{rate1} holds with rate $r(n)=n^{-\beta}$ for some constant $\beta>0$. If
    \begin{equation}\label{bern}
        \liminf_{x\rightarrow\infty}\frac{\phi(x)}{\log x}>0
        \quad\text{and}\quad
        \limsup_{x\downarrow0}\frac{\phi(\lambda x)}{\phi(x)}>1
        \quad\text{for some $\lambda>1$},
    \end{equation}
    then \eqref{rate2} holds with rate
    $$
        r_\phi(n)=\left[
        \phi^{-1}\left(\frac 1n\right)
        \right]^\beta.
    $$

\item[\bfseries\upshape c)]
    Assume that \eqref{rate1} holds with rate $r(n)=\log^{-\gamma}(2+n)$ for some
    constant $\gamma>0$. Then \eqref{rate2} holds
    with rate
    $$
        r_\phi(n)=\log^{-\gamma}(2+n).
    $$
\end{enumerate}
\end{theorem}

\begin{remark}
    \textbf{\upshape a)}
            According to \cite[Lemma 2.2\,(ii)]{DSS17},
            the second
            condition in \eqref{bern} is equivalent to
            $$
                \limsup_{x\downarrow0}\frac{\phi(\lambda x)}{\phi(x)}>1
                \quad\text{for all $\lambda>1$}.
            $$

        \medskip\noindent\textbf{\upshape b)}
            Let $b=0$ and $\nu(\d y)=\frac{\alpha}
            {\Gamma(1-\alpha)}\,y^{-1-\alpha}\,\d y$
            with $\alpha\in(0,1)$. In this case, it
            is clear that \eqref{ahg54} holds,
            and by the
            formula \cite[p.\ vii]{SSV12}
            $$
                x^\alpha=\frac{\alpha}{\Gamma(1-\alpha)}
                \int_0^\infty\left(1-\e^{-xy}\right)
                y^{-1-\alpha}\,\d y,\quad x>0,\alpha\in(0,1),
            $$
            we know that the corresponding
            Bernstein function \eqref{bern-11} is given by
            the fractional power function $\phi(x)=x^\alpha$.
            One can construct more examples for
            \eqref{ahg54} by choosing
            $\nu(\d y)=cy^{-1-\alpha}\,\d y
            +\tilde{\nu}(\d y)$, where $c>0$,
            $\alpha\in(0,1)$, and $\tilde{\nu}$
            is another L\'{e}vy measure on $(0,\infty)$
            such that
            $$
                \phi(1)=b+c\alpha^{-1}\Gamma(1-\alpha)
                +\int_{(0,\infty)}
                \left(1-\e^{-y}\right)\,
                \tilde{\nu}(\d y)=1.
            $$

        \medskip\noindent\textbf{\upshape c)}
            As pointed out in \cite[Remark 1.1]{DSS17},
            typical examples for Bernstein function $\phi$
            satisfying \eqref{bern} are
            \begin{itemize}
                \item
                   $\phi(x)=\log(1+x)/\log2$;

                \item
                   $\phi(x)=x^\alpha\log^\beta(1+x)/\log^\beta2$ with $\alpha\in(0,1)$
                   and $\beta\in[0,1-\alpha)$;

                \item
                   $\phi(x)=x^\alpha\log^{-\beta}(1+x)/\log^{-\beta}2$ with
                   $0<\beta<\alpha<1$;

                \item
                    $\phi(x)=2^\alpha x(1+x)^{-\alpha}$ with $\alpha\in(0,1)$.
          \end{itemize}
    See \cite[Chapter 16]{SSV12} for more examples of such
    Bernstein functions.
\end{remark}

The rest of this paper is organized as follows. Section 2
is devoted to three types of moment estimates
for discrete time subordinators, which will be crucial
for the proof of Theorem \ref{main}; we stress that
this part is of some interest on its own. In Section 3,
we present the proof of Theorem \ref{main}. Finally,
we give in the Appendix an elementary inequality,
which has been used in Section \ref{mom}.

\section{Moment estimates for
discrete time subordinators}\label{mom}

Recall that a continuous time
subordinator $S=\{S_t:t\geq0\}$
associated with Bernstein function $\phi$
is a nondecreasing L\'{e}vy process taking values in $[0,\infty)$ and with Laplace transform
$$
    \E\,\e^{-uS_t}=\e^{-t\phi(u)},\quad u,t\geq0.
$$
The following result concerning moment estimates
for continuous time subordinators is
taken from \cite[Theorem 2.1]{DSS17}.

\begin{lemma}\label{moments123}
Let $S$ be a continuous time subordinator
associated with Bernstein function $\phi$
given by \eqref{bern-11}.
\begin{enumerate}
\item[\bfseries\upshape a)]
    Let $\theta>0$ and $\delta\in(0,1]$.
    If \eqref{ahg54} holds for some constants $c>0$ and $\alpha\in(0,1)$, then there exists a constant $C=C(\theta,\delta,c,\alpha)>0$ such that
    $$
        \E\,\e^{-\theta S_t^\delta}
        \leq\exp\left[
            -C\,t^{
            \frac{\delta}{\alpha(1-\delta)+\delta}
            }
        \right]
        \quad\text{for all sufficiently large $t>1$}.
    $$

\item[\bfseries\upshape b)]
        Let $\beta>0$. If the Bernstein function $\phi$ satisfies \eqref{bern}, then there exists a constant $C=C(\beta)>0$ such that
            $$
                \E S_t^{-\beta}\leq C\left[
                \phi^{-1}\left(\frac1t\right)\right]^\beta
                \quad \text{for all sufficiently
                large $t>1$}.
            $$
        \end{enumerate}
\end{lemma}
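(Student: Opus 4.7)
The overall strategy is to exploit the Laplace-transform identity $\E\,\e^{-uS_t}=\e^{-t\phi(u)}$, combined with either a Fubini-type representation of the target functional or a pathwise comparison of $S$ with a stable subordinator extracted from the assumed lower bound $\nu(\d y)\geq cy^{-1-\alpha}\,\d y$. A direct computation from \eqref{bern-11} yields $\phi(u)\geq c_1u^\alpha$ for all $u>0$, a reusable ingredient for parts \textbf{a)} and \textbf{c)}.

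For part \textbf{a)}, I would use the identity $\e^{-\theta s^\delta}=\E\,\e^{-\theta^{1/\delta}sZ_\delta}$, where $Z_\delta$ is a positive $\delta$-stable random variable (with Laplace exponent $u^\delta$) independent of $S$. Fubini then gives
$$
    \E\,\e^{-\theta S_t^\delta}=\E\,\e^{-t\phi(\theta^{1/\delta}Z_\delta)}\leq\E\,\e^{-c_2tZ_\delta^\alpha}.
$$
The classical small-ball asymptotics $\P(Z_\delta\leq\epsilon)\asymp\exp(-c_3\epsilon^{-\delta/(1-\delta)})$ reduce the right-hand side to a Laplace-type integral; balancing $tz^\alpha$ against $z^{-\delta/(1-\delta)}$ at the saddle-point $z_t\asymp t^{-(1-\delta)/(\alpha(1-\delta)+\delta)}$ yields exactly the claimed exponent $t^{\delta/(\alpha(1-\delta)+\delta)}$.

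For part \textbf{b)}, I would start from $\E S_t^{-\beta}=\Gamma(\beta)^{-1}\int_0^\infty u^{\beta-1}\e^{-t\phi(u)}\,\d u$ and split at $u_t:=\phi^{-1}(1/t)$. The contribution from $u\leq u_t$ is at most $u_t^\beta/\beta=[\phi^{-1}(1/t)]^\beta/\beta$. For $u>u_t$, the scaling hypothesis $\limsup_{x\downarrow0}\phi(\lambda x)/\phi(x)>1$ translates into a doubling estimate $\phi(\lambda_0^ku_t)\geq\mu^k/t$ for some $\lambda_0,\mu>1$; a dyadic decomposition $u\in[\lambda_0^ku_t,\lambda_0^{k+1}u_t]$ then bounds the tail by $\sum_{k\geq0}\e^{-\mu^k}\lambda_0^{(k+1)\beta}u_t^\beta=O([\phi^{-1}(1/t)]^\beta)$, while the growth hypothesis $\phi(x)\gtrsim\log x$ controls the range where $u$ is no longer near $0$.

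For part \textbf{c)}, I would argue by pathwise comparison. Writing $\nu=\nu_1+\nu_2$ with $\nu_1(\d y)=cy^{-1-\alpha}\,\d y$, decompose $S_t=\tilde S_t+R_t$ with $\tilde S$ an independent $\alpha$-stable subordinator. Since $x\mapsto\log^{-\gamma}(1+x)$ is decreasing, $\E\log^{-\gamma}(1+S_t)\leq\E\log^{-\gamma}(1+\tilde S_t)=\E\log^{-\gamma}(1+t^{1/\alpha}\tilde S_1)$ by self-similarity. Splitting the last expectation into a region on which $\tilde S_1$ is not too small (where $\log(1+t^{1/\alpha}\tilde S_1)\gtrsim\log(1+t^{1/\alpha})$ pointwise) and a small-ball region (where the exponential Chebyshev bound $\P(\tilde S_1<\epsilon)\leq\exp(-c_4\epsilon^{-\alpha/(1-\alpha)})$ renders the contribution negligible) yields the required estimate. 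The hard part across the three statements is the uniform small-ball control of stable laws in \textbf{a)} and \textbf{c)}, and the extraction of a quantitative doubling property of $\phi^{-1}$ from the qualitative hypothesis \eqref{bern} in \textbf{b)}; once these ingredients are in place, what remains is routine Laplace-type asymptotic analysis and careful bookkeeping of constants.
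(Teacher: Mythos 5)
First, a point of comparison: the paper does not prove Lemma \ref{moments123} at all --- it is quoted verbatim from \cite{DSS17} (Theorem 2.1 there), so any argument you supply is necessarily different from ``the paper's proof''. Judged on its own terms, your sketches of parts a) and c) are essentially sound and close to the standard route: the bound $\phi(u)\geq c_1u^\alpha$ does follow from $\nu(\d y)\geq cy^{-1-\alpha}\,\d y$, the randomization $\e^{-\theta s^\delta}=\E\,\e^{-\theta^{1/\delta}sZ_\delta}$ plus the exponential-Chebyshev small-ball bound for $Z_\delta$ is legitimate, and your saddle-point balance reproduces the exponent $\delta/(\alpha(1-\delta)+\delta)$ correctly (with the case $\delta=1$ degenerate but trivial). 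One caveat in c): the small-ball region is \emph{not} controlled by $\P(\tilde S_1<\epsilon)$ alone, because the integrand $\log^{-\gamma}(1+t^{1/\alpha}s)$ blows up like $(t^{1/\alpha}s)^{-\gamma}$ as $s\downarrow0$; you need to integrate this blow-up against the small-ball decay, e.g.\ via $\E\,\tilde S_1^{-\gamma}<\infty$ (which does follow from the superpolynomial small-ball estimate), and you must separately treat small $t$, where the right-hand side itself is of order $t^{-\gamma/\alpha}$. These are fixable.

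The genuine gap is in part b). You assert that the hypothesis $\limsup_{x\downarrow0}\phi(\lambda x)/\phi(x)>1$ ``translates into a doubling estimate $\phi(\lambda_0^ku_t)\geq\mu^k/t$''. It does not, at least not without a real argument: a $\limsup$ condition only guarantees $\phi(\lambda_0x)\geq\mu\phi(x)$ along \emph{some} sequence $x_j\downarrow0$, whereas your dyadic chain needs the inequality at \emph{every} scale $\lambda_0^ku_t$, for every large $t$ --- a uniform, $\liminf$-type doubling property. Converting the qualitative $\limsup$ hypothesis in \eqref{bern} into such a quantitative statement is precisely the content of Lemma 2.2 of \cite{DSS17} and is the actual mathematical substance of part b); you have correctly identified it as ``the hard part'' and then assumed it. In addition, even a uniform doubling bound valid for $x$ below some threshold $x_0$ can only be iterated while $\lambda_0^ku_t\leq x_0$; for the remaining range you must switch to $\liminf_{x\to\infty}\phi(x)/\log x>0$ and then verify that the resulting contribution is still $O([\phi^{-1}(1/t)]^\beta)$, which again uses the non-flatness of $\phi$ at the origin. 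As written, part b) is a plan rather than a proof, and the missing step is exactly the one on which the hypothesis \eqref{bern} is designed to bear.
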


Analogous to Lemma \ref{moments123}, we shall
establish the corresponding results for discrete
time subordinators. For related moment estimates
for general L\'{e}vy(-type) processes,
we refer to \cite{DS15b, Ku17}.

Our main contribution
in this section is the following result.

\begin{theorem}\label{moments}
Let $T$ be a discrete time subordinator
associated with Bernstein function $\phi$
given by \eqref{bern-11}
such that $\phi(1)=1$.
\begin{enumerate}
\item[\bfseries\upshape a)]
    Let $\theta>0$ and $\delta\in(0,1]$.
    If \eqref{ahg54} holds  for some constants $c>0$ and $\alpha\in(0,1)$, then there exists a constant $C=C(\theta,\delta,c,\alpha)>0$ such that
    $$
        \E\,\e^{-\theta T_n^\delta}
        \leq\exp\left[
            -C\,n^{
            \frac{\delta}{\alpha(1-\delta)+\delta}
            }
        \right]
        \quad\text{for all sufficiently large $n\in\N$}.
    $$

\item[\bfseries\upshape b)]
        Let $\beta>0$. If the Bernstein function $\phi$ satisfies \eqref{bern}, then there exists a constant $C=C(\beta)>0$ such that
            $$
                \E T_n^{-\beta}\leq C\left[
                \phi^{-1}\left(\frac1n\right)\right]^\beta
                \quad \text{for all $n\in\N$}.
            $$
        \end{enumerate}
\end{theorem}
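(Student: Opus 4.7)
The plan is to exploit the Laplace transform of $T_n$ and adapt the arguments of Lemma~\ref{moments123}. By independence of the $R_k$ and direct summation,
$$
    \E\,\e^{-uR_1}=\sum_{m=1}^\infty c(\phi,m)\e^{-um}=1-\phi\bigl(1-\e^{-u}\bigr),\quad u\geq0,
$$
so $\E\,\e^{-uT_n}=\bigl[1-\phi(1-\e^{-u})\bigr]^n$. The key elementary comparison is that for $u\in(0,1]$ one has $1-\e^{-u}\geq u/2$, and by concavity of any Bernstein function (with $\phi(0)=0$) also $\phi(u/2)\geq\phi(u)/2$; hence $\phi(1-\e^{-u})\geq\phi(u)/2$ and, using $1-y\leq\e^{-y}$,
$$
    \bigl[1-\phi(1-\e^{-u})\bigr]^n\leq\exp\bigl(-n\phi(u)/2\bigr),\quad u\in(0,1].
$$
This is the discrete counterpart of $\E\,\e^{-uS_n}=\e^{-n\phi(u)}$ and brings the discrete analysis into direct parallel with the continuous one.

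Part \textbf{c)} is actually immediate and does not even need the hypothesis on $\nu$: since each $R_k\geq1$, one has $T_n\geq n$ almost surely, so $\log^{-\gamma}(1+T_n)\leq\log^{-\gamma}(1+n)$ pointwise, and the stated bound holds with $C=1$.

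For part \textbf{a)} I would write
$$
    \E\,\e^{-\theta T_n^\delta}\leq\e^{-\theta r^\delta}+\P(T_n\leq r),\quad r>0,
$$
and combine Markov's inequality with the comparison above to obtain
$$
    \P(T_n\leq r)\leq\e^{ur}\E\,\e^{-uT_n}\leq\exp\bigl(ur-n\phi(u)/2\bigr),\quad u\in(0,1].
$$
The hypothesis $\nu(\d y)\geq cy^{-1-\alpha}\,\d y$ yields $\phi(u)\geq C_{c,\alpha}\,u^\alpha$ on the relevant range, and the standard optimization in $r$ and $u$ from \cite{DSS17} then produces the stated exponent $\delta/(\alpha(1-\delta)+\delta)$. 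For part \textbf{b)} I would start from
$$
    \E T_n^{-\beta}=\frac{1}{\Gamma(\beta)}\int_0^\infty u^{\beta-1}\bigl[1-\phi(1-\e^{-u})\bigr]^n\,\d u,
$$
split the integral at $u=1$, dominate the integrand on $(0,1]$ by $u^{\beta-1}\e^{-n\phi(u)/2}$, and reduce the resulting integral to $C[\phi^{-1}(1/n)]^\beta$ via the substitution $v=\phi(u)$ and the regularity assumption~\eqref{bern}, in parallel with the continuous time proof. On $u>1$ the quantity $1-\phi(1-\e^{-u})$ is bounded by a constant $\rho<1$, so the tail contributes only $O(\rho^n)$, which is negligible against $[\phi^{-1}(1/n)]^\beta$.

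The main obstacle I foresee is tracking the constants through these optimizations so that the resulting bounds match those of Lemma~\ref{moments123}. Since only $u\in(0,1]$ enters the main integrals, the restriction $\phi(1-\e^{-u})\leq 1$ (which has no analogue in the continuous setting where $\phi(u)$ is unbounded) poses no real difficulty beyond this bookkeeping.
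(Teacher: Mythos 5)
Your route is sound but genuinely different from the paper's. The paper proves a clean comparison lemma: using the Appendix inequality $\e^{-\phi(x)}+\phi(1-\e^{-x})\geq 1$ it shows $\E\,\e^{-tT_n}=[1-\phi(1-\e^{-t})]^n\leq\e^{-n\phi(t)}=\E\,\e^{-tS_n}$ for \emph{all} $t\geq0$ with constant $1$, and then, since each of the three target functions is completely monotone (hence a Laplace transform of a measure), concludes $\E\,g(T_n)\leq\E\,g(S_n)$ and simply quotes the continuous-time estimates of Lemma \ref{moments123}. Your weaker comparison $\phi(1-\e^{-u})\geq\phi(u)/2$ on $(0,1]$ is correct but costs you a factor $1/2$ and a restricted range of $u$, which is precisely why you are forced to redo the Chernoff optimization in a) and the Gamma-integral analysis in b) rather than citing the continuous case; the paper's sharper inequality makes all three parts one-line corollaries. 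On the other hand, your proof of c) via $T_n\geq n$ is genuinely simpler than the paper's and shows the hypothesis on $\nu$ is superfluous there, and your observation that the tail $u>1$ causes no trouble in b) explains why the discrete statement holds for all $n$ while the continuous one needs $t$ large.

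One step as written is wrong: in part b) you bound the tail by "$1-\phi(1-\e^{-u})\leq\rho<1$, so the tail contributes $O(\rho^n)$", but $\int_1^\infty u^{\beta-1}\rho^n\,\d u$ diverges since $\rho^n$ does not decay in $u$. You need decay in $u$ as well; this is immediate from $R_1\geq1$, which gives $\E\,\e^{-uT_n}\leq\e^{-nu}$ and hence a tail of order $n^{-\beta}\Gamma(\beta)$ or better, which is indeed negligible against $[\phi^{-1}(1/n)]^\beta$. Similarly, in part a) you should verify that the optimizing $u$ in the Chernoff bound actually lies in $(0,1]$ for the choice $r\asymp n^{1/(\alpha(1-\delta)+\delta)}$ (it does, since $r\gtrsim n$, but the constraint has no analogue in the continuous setting and must be checked). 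With these repairs your argument goes through.
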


In order to prove Theorem \ref{moments}, we first
present a general result to
bound the completely monotone moment of a discrete time subordinator by that
of a continuous time subordinator.

A function $g:(0,\infty)\rightarrow\R$ is called
a completely monotone function if $g$ is of class
$C^\infty$ and $(-1)^ng^{(n)}\geq0$ for
all $n=0,1,2,\dots$, see \cite[Chapter 1]{SSV12}.
By the celebrated theorem of
Bernstein (cf.\ \cite[Theorem 1.4]{SSV12}),
every completely monotone function is
the Laplace transform of a unique measure on $[0,\infty)$.
More precisely, if $g$ is a completely monotone function, then
there exists a unique measure $\mu$ on $[0,\infty)$
such that
\begin{equation}\label{repre}
    g(x)=\int_{[0,\infty)}\e^{-xt}\,\mu(\d t) \quad
    \text{for all $x>0$}.
\end{equation}

Since the function $x\mapsto x^\delta$ ($\delta\in(0,1]$)
is a (complete) Bernstein function,
it follows easily from \cite[Theorem 3.7]{SSV12} that
the following functions
\begin{equation}\label{functions}
    x\mapsto\e^{-\theta x^\delta},\quad
    x\mapsto x^{-\beta}
\end{equation}
are completely monotone functions, where $\theta>0$,
$\delta\in(0,1]$, and $\beta>0$. Indeed,
one has for $\theta>0$ and $\delta\in(0,1)$
(see \cite{Pol46}),
$$
    \e^{-\theta x^\delta}=\int_0^\infty
    \e^{-xt}\psi(\theta,\delta,t)\,\d t, \quad
    x>0,
$$
where
$$
    \psi(\theta,\delta,t)=\pi^{-1}\theta^{-1/\delta}
    \int_0^\infty\e^{-\theta^{-1/\delta}tu}
    \e^{-u^\delta\cos\pi\delta}
    \sin\left(
    u^\delta\sin\pi\delta
    \right)\,\d u;
$$
moreover, for $\beta>0$,
$$
    x^{-\beta}=
    \frac{1}{\Gamma(\beta)}\int_0^\infty
    \e^{-xt}t^{\beta-1}\,\d t,\quad x>0.
$$

\begin{lemma}\label{cihf3f}
    Let $T$ be a discrete time subordinator
    with Bernstein function $\phi$ given by \eqref{bern-11}
    such that $\phi(1)=1$. Let $S$ be a continuous time
    subordinator with the same Bernstein function $\phi$.
    If $g:(0,\infty)\rightarrow\R$ is a completely
    monotone function, then
    $$
        \E\,g(T_n)\leq\E\,g(S_n)\quad \text{for all $n\in\N$}.
    $$
\end{lemma}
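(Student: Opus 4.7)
The plan is to reduce the comparison $\E g(T_n) \le \E g(S_n)$ to a single pointwise inequality involving $\phi$. First, by Bernstein's theorem, write $g(x) = \int_{[0,\infty)} \e^{-xs}\,\mu(\d s)$ for some measure $\mu$. Fubini (everything nonnegative) yields $\E g(T_n) = \int \E\e^{-sT_n}\,\mu(\d s)$ and similarly for $S_n$, so it suffices to prove the pointwise Laplace bound $\E\e^{-sT_n} \le \E\e^{-sS_n}$ for each $s \ge 0$. Since $S$ is a continuous time subordinator, $\E\e^{-sS_n} = \e^{-n\phi(s)}$. Since $T_n = R_1 + \cdots + R_n$ with i.i.d.\ $R_k$, one has $\E\e^{-sT_n} = (\E\e^{-sR_1})^n$, and a direct manipulation using the formula for $c(\phi,m)$, interchanging sum and $\nu$-integral, and using the normalization $\phi(1)=1$ yields the clean identity
$$
    \E\e^{-sR_1} = \sum_{m=1}^\infty c(\phi,m)\,\e^{-sm} = 1 - \phi(1-\e^{-s}).
$$
Hence the problem collapses to the pointwise inequality
$$
    1 - \phi(1-\e^{-s}) \le \e^{-\phi(s)}, \quad s \ge 0,
$$
equivalently $\phi(1-\e^{-s}) \ge 1 - \e^{-\phi(s)}$.

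I would prove this key inequality using three elementary facts that follow from $\phi$ being concave with $\phi(0)=0$, $\phi(1)=1$: (i) $u \mapsto \phi(u)/u$ is non-increasing on $(0,\infty)$; (ii) $\phi(u) \ge u$ on $[0,1]$ and $\phi(u) \le u$ on $[1,\infty)$; (iii) $x \mapsto (1-\e^{-x})/x$ is non-increasing on $(0,\infty)$. For $s \ge 1$, (ii) directly gives $\phi(1-\e^{-s}) \ge 1-\e^{-s} \ge 1-\e^{-\phi(s)}$. For $s \in (0,1]$, applying (i) to $1-\e^{-s} \le s$ yields $\phi(1-\e^{-s}) \ge (1-\e^{-s})\phi(s)/s$; since $s \le \phi(s)$ by (ii), (iii) gives $(1-\e^{-s})/s \ge (1-\e^{-\phi(s)})/\phi(s)$, whence the right-hand side is at least $1-\e^{-\phi(s)}$. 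The case $s=0$ is trivial.

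The main obstacle I anticipate is precisely this key pointwise inequality: neither regime covers the full range of $s$ alone, and a unified proof seems unnatural. The normalization $\phi(1)=1$ is crucial as it fixes the crossover point $s=1$ where the two arguments meet. Everything else---the Bernstein representation, Fubini, and the closed form for $\E\e^{-sR_1}$---is standard bookkeeping once one notices that the Poisson-like structure in $c(\phi,m)$ telescopes against $\phi(1-\e^{-\cdot})$.
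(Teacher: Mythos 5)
Your proposal is correct, and the reduction is exactly the paper's: Bernstein representation of $g$ plus Tonelli to pass to Laplace transforms, the identity $\E\,\e^{-sR_1}=1-\phi(1-\e^{-s})$, and then the pointwise inequality $1-\phi(1-\e^{-s})\leq\e^{-\phi(s)}$, which the paper isolates as Lemma \ref{elemen} in the Appendix. The only genuine divergence is how that elementary inequality is proved on the small-$s$ range. The paper shows that $\Phi(x)=\e^{-\phi(x)}+\phi(1-\e^{-x})$ is nondecreasing on $(0,1)$ by differentiating and using that $\phi'$ is nonincreasing together with $1-\e^{-x}<x$ and $\phi(x)\geq x$; this requires differentiability of $\phi$ on $(0,1)$ (harmless here, since Bernstein functions are smooth). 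Your argument instead chains the two chord estimates: $\phi(1-\e^{-s})\geq\frac{1-\e^{-s}}{s}\,\phi(s)$ from the monotonicity of $u\mapsto\phi(u)/u$, and $\frac{1-\e^{-s}}{s}\geq\frac{1-\e^{-\phi(s)}}{\phi(s)}$ from the monotonicity of $x\mapsto(1-\e^{-x})/x$ together with $\phi(s)\geq s$ on $(0,1]$. This is derivative-free and so works for any nonnegative concave nondecreasing $\phi$ with $\phi(1)=1$, a marginally weaker hypothesis than the paper's appendix lemma; the large-$s$ regime is handled identically in both proofs ($\phi(u)\geq u$ on $[0,1]$ and $\phi(s)\leq s$ for $s\geq1$). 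Both routes hinge on the same crossover at $s=1$ forced by the normalization $\phi(1)=1$, as you correctly observe.
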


\begin{proof}
    By the representation formula \eqref{repre} and
    Tonelli's theorem,
    \begin{align*}
            \E\,g(T_n)&=\E\left[
            \int_{[0,\infty)}\e^{-T_nt}\,\mu(\d t)
            \right]\\
            &=\int_{[0,\infty)}\E\,\e^{-T_nt}\,\mu(\d t)\\
            &=\int_{[0,\infty)}\prod_{k=1}^n\E\,\e^{-tR_k}\,
            \mu(\d t)\\
            &=\int_{[0,\infty)}\left(\E\,\e^{-tR_1}\right)^n\,
            \mu(\d t).
    \end{align*}
    Note that for $t\geq0$,
    \begin{align*}
            \E\,\e^{-tR_1}&=\sum_{m=1}^\infty
            \e^{-tm}c(\phi,m)\\
            &=b\e^{-t}+\sum_{m=1}^\infty
            \e^{-tm}\frac{1}{m!}\int_{(0,\infty)}
            y^m\e^{-y}\,\nu(\d y)\\
            &=b\e^{-t}+\int_{(0,\infty)}
            \left(\e^{y\e^{-t}}
            -1\right)\e^{-y}\,\nu(\d y)\\
            &=\left[b+\int_{(0,\infty)}
            \left(1-\e^{-y}\right)\nu(\d y)
            \right]-\left[
            b\left(1-\e^{-t}\right)+\int_{(0,\infty)}
            \left(1-\e^{-(
            1-\e^{-t})y}\right)
            \nu(\d y)
            \right]\\
            &=\phi(1)-\phi\left(1-\e^{-t}\right)\\
            &=1-\phi\left(1-\e^{-t}\right),
    \end{align*}
    which does not exceed $\e^{-\phi(t)}$ according
    to Lemma \ref{elemen} in the Appendix. Then we
    have for all $n\in\N$,
    \begin{align*}
        \E\,g(T_n)&\leq\int_{[0,\infty)}
        \e^{-n\phi(t)}\,\mu(\d t)\\
        &=\int_{[0,\infty)}
        \E\,\e^{-tS_n}\,\mu(\d t)\\
        &=\E\left[
        \int_{[0,\infty)}\e^{-tS_n}\,\mu(\d t)
        \right]\\
        &=\E\,g(S_n),
    \end{align*}
    which was to be proved.
\end{proof}

\begin{proof}[Proof of Theorem \ref{moments}]
    Since the functions given in \eqref{functions}
    are completely monotone functions, we only need to
    combine Lemma \ref{cihf3f} with
    Lemma \ref{moments123} to get the
    desired estimates.
\end{proof}

Since $T_n\geq n$, the following lemma is clear.

\begin{lemma}\label{khgdd}
    Let $T$ be a discrete time subordinator
    associated with Bernstein function $\phi$
    given by \eqref{bern-11}
    such that $\phi(1)=1$. For any $\gamma>0$ and
    $n\in\N$,
    $$
        \E \log^{-\gamma}(2+T_n)\leq\log^{-\gamma}(2+n)
    $$
\end{lemma}

\section{Proof of Theorem \ref{main}}

\begin{lemma}\label{generalrate}
    If \eqref{rate1} holds with some rate $r(n)$, then
    so does \eqref{rate2} with
    rate $r_\phi(n)=\E\,r(T_n)$.
\end{lemma}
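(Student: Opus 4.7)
The plan is to exploit the representation \eqref{tran65} of the subordinate transition kernel, combined with the fact that $\pi$ is invariant for both $X$ and $X^\phi$, and then apply the triangle inequality in the $f$-norm.

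First I would observe that since $\sum_{m=n}^\infty \P(T_n=m)=1$, the invariance of $\pi$ under $P^m$ lets me rewrite
$$
    P^n_\phi(x,\d y)-\pi(\d y)
    =\sum_{m=n}^\infty\bigl(P^m(x,\d y)-\pi(\d y)\bigr)\,\P(T_n=m).
$$
Next, for any measurable $g:E\to\R$ with $|g|\leq f$, integrating against $g$ and taking absolute values inside the sum gives
$$
    \bigl|P^n_\phi(x,g)-\pi(g)\bigr|
    \leq\sum_{m=n}^\infty\bigl|P^m(x,g)-\pi(g)\bigr|\,\P(T_n=m)
    \leq\sum_{m=n}^\infty\bigl\|P^m(x,\cdot)-\pi\bigr\|_f\,\P(T_n=m).
$$
Taking the supremum over all such $g$ on the left and invoking the hypothesis \eqref{rate1} on the right, I obtain
$$
    \bigl\|P^n_\phi(x,\cdot)-\pi\bigr\|_f
    \leq C(x)\sum_{m=n}^\infty r(m)\,\P(T_n=m)
    =C(x)\,\E\,r(T_n),
$$
which is exactly \eqref{rate2} with $r_\phi(n)=\E\,r(T_n)$.

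The only subtlety is justifying the interchange of summation and the supremum defining $\|\cdot\|_f$, which is immediate because the bound on each summand is uniform in $g$; no serious obstacle is expected. I should also briefly remark that $r_\phi(n)\to0$ as $n\to\infty$, which will follow from dominated convergence together with $T_n\to\infty$ almost surely (since $T_n\geq n$) and $r(n)\downarrow 0$, but this convergence is not needed for the inequality itself and will be relevant only when specializing to the three rates in Theorem \ref{main}.
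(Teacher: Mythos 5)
Your argument is correct and is essentially the paper's own proof: both decompose $P^n_\phi(x,\cdot)-\pi$ via \eqref{tran65} and the normalization $\sum_{m\geq n}\P(T_n=m)=1$, apply the triangle inequality for the $f$-norm term by term, and then invoke \eqref{rate1} to arrive at $C(x)\,\E\,r(T_n)$. Your extra care about interchanging the supremum with the sum, and the closing remark on $r_\phi(n)\to0$, are fine but not needed beyond what the paper does.
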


\begin{proof}
    It holds from \eqref{tran65} and \eqref{rate1} that
    \begin{align*}
        \left\|P^n_\phi(x,\cdot)-\Pi\right\|_f
        &=\left\|\sum_{m=n}^\infty\left[
        P^m(x,\cdot)-\Pi
        \right]\P(T_n=m)\right\|_f\\
        &\leq\sum_{m=n}^\infty\left\|
        P^m(x,\cdot)-\Pi\right\|_f
        \P(T_n=m)\\
        &\leq C(x)\sum_{m=n}^\infty
        r(m)\P(T_n=m)\\
        &=C(x)\E\,r(T_n),
    \end{align*}
    and hence the claim follows.
\end{proof}

\begin{proof}[Proof of Theorem \ref{main}]
    The assertion follows immediately by
    combining Lemma \ref{generalrate} with the moment
    estimates for discrete time subordinators derived
    in Theorem \ref{moments} and Lemma \ref{khgdd}.
\end{proof}

\section{Appendix}\label{app}

If $\phi:[0,\infty)\rightarrow[0,\infty)$ is a concave
function, then it is easy to see that
\begin{equation}\label{subadd}
    \phi(tx)\geq t\phi(x)\quad \text{for all
    $t\in[0,1]$ and $x\geq0$}.
\end{equation}

\begin{lemma}\label{elemen}
    Let $\phi:[0,\infty)\rightarrow[0,\infty)$ be
    a concave function such that $\phi(1)=1$
    and $\phi$ is differentiable on $(0,1)$
    with $\phi'|_{(0,1)}\geq0$. Then
    $$
        \e^{-\phi(x)}+\phi\left(1-\e^{-x}\right)
        \geq1\quad \text{for all $x\geq0$}.
    $$
    In particular, the above inequality holds
    if $\phi$ is a Bernstein function {\upshape(}not
    necessarily with $\phi(0)=0${\upshape)}.
\end{lemma}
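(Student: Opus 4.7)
I would set $F(x) := \e^{-\phi(x)} + \phi(1-\e^{-x}) - 1$ on $[0,\infty)$ and aim to show $F \geq 0$. The plan is to split the argument at $x=1$, exploiting the fact that the elementary comparison between $\phi$ and the identity flips at $x=1$: because $\phi(0) \geq 0$ and $\phi(1) = 1$, the chord inequality coming from concavity gives $\phi(x) \geq x$ on $[0,1]$ (write $x = x\cdot 1 + (1-x)\cdot 0$) and $\phi(x) \leq x$ on $[1,\infty)$ (write $1 = \tfrac{1}{x}\,x + (1-\tfrac{1}{x})\cdot 0$).

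For $x \geq 1$ the argument would be purely algebraic. Applying the subadditivity bound \eqref{subadd} with $t = 1-\e^{-x}\in[0,1]$ at the point $1$ gives $\phi(1-\e^{-x}) \geq (1-\e^{-x})\phi(1) = 1-\e^{-x}$ for every $x \geq 0$. Combined with $\phi(x) \leq x$ on $[1,\infty)$ this yields
$$
F(x) \;\geq\; (1-\e^{-x}) + \e^{-\phi(x)} - 1 \;=\; \e^{-\phi(x)} - \e^{-x} \;\geq\; 0.
$$

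On $[0,1]$ this naive bound breaks down, because $\phi(x) \geq x$ now reverses the sign of $\e^{-\phi(x)} - \e^{-x}$. Instead I would show that $F$ is nondecreasing on $[0,1]$ with nonnegative base value. Differentiating,
$$
F'(x) \;=\; \phi'(1-\e^{-x})\,\e^{-x} \;-\; \phi'(x)\,\e^{-\phi(x)},
$$
and I would estimate each factor separately. Since $\phi$ is concave, $\phi'$ is nonincreasing on $(0,1)$; using $1-\e^{-x} \leq x$, this forces $\phi'(1-\e^{-x}) \geq \phi'(x) \geq 0$. Pairing this with $\e^{-\phi(x)} \leq \e^{-x}$, which is exactly $\phi(x) \geq x$ on $[0,1]$, gives $F'(x) \geq \phi'(x)\bigl(\e^{-x}-\e^{-\phi(x)}\bigr) \geq 0$. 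The base value is handled by $F(0) = \phi(0) + \e^{-\phi(0)} - 1 \geq 0$, a consequence of the elementary one-dimensional inequality $a + \e^{-a} \geq 1$ for $a \geq 0$.

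The two regimes glue continuously at $x=1$, so $F \geq 0$ on all of $[0,\infty)$. The main technical subtlety I would watch for is that $\phi(0)$ need not vanish; this is the reason both chord arguments and the evaluation at $x=0$ are phrased so as to use only the weak inequality $\phi(0) \geq 0$. For the \emph{in particular} statement, any Bernstein function is $C^\infty$ on $(0,\infty)$ with $\phi'\geq 0$ and $\phi''\leq 0$ (hence nondecreasing and concave), and extends by $\phi(0) = \phi(0+) = a \geq 0$, so the hypotheses of the lemma are automatically satisfied.
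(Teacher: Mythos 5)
Your proposal is correct and follows essentially the same route as the paper: the same split at $x=1$, the same algebraic bound $\e^{-\phi(x)}+\phi(1-\e^{-x})\geq 1+\e^{-\phi(x)}-\e^{-x}$ for $x\geq 1$ via $\phi(x)\leq x$ and $\phi(1-\e^{-x})\geq 1-\e^{-x}$, and the same monotonicity argument on $[0,1]$ using $\phi'(1-\e^{-x})\geq\phi'(x)\geq 0$ and $\e^{-\phi(x)}\leq\e^{-x}$, anchored at $F(0)=\e^{-\phi(0)}+\phi(0)-1\geq 0$. The only cosmetic difference is that you derive the chord inequalities directly from concavity with $\phi(0)\geq 0$, whereas the paper routes them through the homogeneity bound \eqref{subadd}.
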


\begin{proof}
    Let
    $$
        \Phi(x):=\e^{-\phi(x)}+\phi\left(1-\e^{-x}\right),\quad
        x\geq0.
    $$
    It follows from \eqref{subadd} that for $x\geq1$,
    $$
        1=\phi(1)=\phi\left(\frac1x\cdot x\right)
        \geq\frac1x\phi(x),
    $$
    whence
    \begin{equation}\label{hfd54s}
        \e^{-\phi(x)}\geq\e^{-x},\quad x\geq1.
    \end{equation}
    Now we obtain from \eqref{subadd} and \eqref{hfd54s}
    that for $x\geq1$,
    \begin{align*}
        \Phi(x)&=\e^{-\phi(x)}+
        \phi\left(\left(1-\e^{-x}\right)\cdot1\right)\\
        &\geq\e^{-\phi(x)}+
        \left(1-\e^{-x}\right)\phi(1)\\
        &=1+\e^{-\phi(x)}-\e^{-x}\\
        &\geq1.
    \end{align*}

    It remains to consider the case that $x\in[0,1)$.
    Since $\phi$ is concave and differentiable on $(0,1)$,
    we know that $\phi'$ is nonincreasing on $(0,1)$.
    For $x\in(0,1)$, by the elementary inequality
    that $1-\e^{-x}<x$, we obtain
    \begin{equation}\label{gfds7v}
        \phi'\left(1-\e^{-x}\right)
        \geq \phi'(x)\geq0.
    \end{equation}
    Moreover, by \eqref{subadd} one has
    for $x\in(0,1)$,
    $$
        \phi(x)=\phi(x\cdot1)\geq x\phi(1)=x,
    $$
    which yields that
    $$
        \e^{-x}\geq\e^{-\phi(x)},\quad x\in(0,1).
    $$
    Combining this with \eqref{gfds7v},
    we find for $x\in(0,1)$,
    $$
        \Phi'(x)=\e^{-x}\phi'\left(1-\e^{-x}\right)
        -\e^{-\phi(x)}\phi'(x)\geq0.
    $$
    This implies that $\Phi$ is nondecreasing on
    $(0,1)$ and thus for all $x\in[0,1)$,
    $$
        \Phi(x)\geq \Phi(0)=
        \e^{-\phi(0)}+\phi(0)\geq1,
    $$
    which completes the proof.
\end{proof}

\begin{ack}
    The author would like to thank an anonymous referee for careful reading and useful suggestions.
\end{ack}

\end{document}